\def\tbcaption{\def\@captype{table}\caption}
\newtheorem{thm}{Theorem}
\newtheorem{lem}[thm]{Lemma}
\newtheorem{cor}[thm]{Corollary}
\theoremstyle{remark}
\newtheorem{remark}[thm]{Remark}
\newtheorem*{theorem*}{Theorem}
\DeclarePairedDelimiter\ceil{\lceil}{\rceil}
\DeclarePairedDelimiter\floor{\lfloor}{\rfloor}
\DeclarePairedDelimiter\fr{\lbrace}{\rbrace}
\DeclarePairedDelimiterX{\cif}[1]{(}{)}{\delimsize(#1\delimsize)}
\begin{document}

\title{Some Refinements of Formulae Involving Floor and Ceiling Functions}

\author{Luka Podrug\\
    Faculty of Civil Engineering, University of Zagreb\\
    Croatia, Zagreb 10000\\
  \text{luka.podrug@grad.unizg.hr}\\
 \and
Dragutin Svrtan\\
    Department of Mathematics\\
    Faculty of Science, University of Zagreb\\
    Croatia, Zagreb 10000\\
  \text{dragutin.svrtan@gmail.com}}

\date{}
\maketitle

\begin{abstract}
The floor and ceiling functions appear often in mathematics and manipulating sums involving floors and ceilings is a subtle game. Fortunately, the well-known textbook \textit{Concrete Mathematics} provides a nice introduction  with a number of techniques explained and a number of single or double sums treated as exercises. For two such double sums we provide their single-sum analogues. These closed-form identities are given in terms of a dual partition of the multiset (regarded as a partition) of all b-ary digits of a nonnegative integer. We also present the double- and single-sum analogues involving the fractional part function and the shifted fractional part function.
\end{abstract}

\noindent

\section{Introduction}\label{intr}

In this paper we are concerned with computing some sums involving floor and ceiling functions, presenting several new formulae that refine previously known results. For the reader's convenience, we start by stating some basic definitions. 

Let $x$ be any real number. The floor of $x$ is defined as the greatest integer less than or equal to $x$. More precisely, $\floor*{x}=\max\left\lbrace n\:|\:n\leq x,\:n\in\mathbb{Z}\right\rbrace$. The ceiling of $x$ is the least integer greater than or equal to $x$, that is  $\ceil*{x}=\min\left\lbrace n\:|\:n\geq x,\:n\in\mathbb{Z}\right\rbrace$. For every $n\in\mathbb{Z}$ and $x\in\mathbb{R}$ we have $\floor*{n+x}=n+\floor*{x}$ and $\ceil*{n+x}=n+\ceil*{x}$. Finally, the fractional part of $x$ is $\fr*{x}=x-\floor{x}$. We sometimes call $\floor*{x}$ the integer part of $x$, since $x=\floor*{x}+\fr*{x}$ with $0\leq \fr*{x}<1$. For every $n\in\mathbb{Z}$ and $0\leq x<1$ we have $\fr*{n+x}=x$. 

Throughout the paper we use the Iverson notation $\left[P\right]$, which encloses a true-or-false statement $P$ in brackets, whose result is 1 if the statement is true, 0 if the statement is false. For example, $$\left[j=k\right]=\begin{cases}
1, &\text{if $j=k$};\\
0, &\text{if $j\neq k$.}
\end{cases}$$
Our main result is a closed-form expression for the sum $$\sum\limits_{k\geq 1}\floor*{\frac{n+jb^{k-1}}{b^k}},$$ where $b\geq 2$,  $0\leq j<b$ and $n$ is any nonnegative integer. The sum is infinite, but only finitely many of its terms are nonzero, and thus it is a well-defined integer. The new formula is a refinement of the known identity $\sum\limits_{k\geq 1}\sum\limits_{0<j<b}\floor*{\frac{n+jb^{k-1}}{b^k}}=n$ \cite[Ex.\ 44, p.\ 44]{TAOCP}. In the second part of the paper we obtain the closed-form expression for the single-sum ceiling analogue $$\sum\limits_{0\leq k\leq \log_bx}\ceil*{\frac{x+jb^k}{b^{k+1}}},$$ and in Section \ref{sec:frac} for the following double- and single-sum fractional analogues,  $$\sum\limits_{0\leq k\leq \log_bn}\sum\limits_{0<j<b}\fr*{\frac{n+jb^k}{b^{k+1}}},$$ 
 $$\sum\limits_{0\leq k\leq \log_bn}\fr*{\frac{n+jb^k}{b^{k+1}}}.$$

\section{Sums involving floor function}\label{sec:floor}

We start with a problem which appears as an exercise in the {\it Concrete Mathematics} \cite[Ex.\ 22, p.\ 97]{GKP}, but which can be traced back to a much older collection of the ‘‘400 best problems’’ \cite[Problem 4346, p.\ 48]{ODMP}. This problem concerns the evaluation of the sum $S(n)=\sum\limits_{k\geq 1}\floor*{\frac{n}{2^k}+\frac{1}{2}}$  and we state two most common ways of dealing with it. 

In the first approach, for a fixed integer $k$, let $f_k(n)=\floor*{\frac{n}{2^k}+\frac{1}{2}}$. The function $x\to \frac{x}{2^k}+\frac{1}{2}$ is clearly a continuous, monotonically increasing function. If $\frac{n}{2^k}+\frac{1}{2}=m\in\mathbb{N}$ for some $n$, then we have $f_k(n-1)<f_k(n)$. For what value(s) of $n$ does this happen? 
\begin{align*}
 \frac{n+2^{k-1}}{2^k}=m \: \Longrightarrow \: n=2^{k-1}\underbrace{\left(2m-1\right)}_{\text{odd}}.     
\end{align*}
Hence, $f_k(n-1)<f_k(n)$ when $n=b\cdot 2^{k-1}$ with $b$ odd. For given $n$, there is exactly one $b$ with that property. So, for fixed $n$, let $b$ be odd and such that $n=b\cdot 2^{k_n-1}$ for some $k_n\geq 1$. We have
\begin{center}$S(n)=\sum\limits_{k\geq 1}f_k(n)$ and $S(n-1)=\sum\limits_{k\geq 1}f_k(n-1)$,\end{center} 
and $f_k(n)=f_k(n-1)$ for every $k\neq k_n$. For $k=k_n$ we have $f_{k_n}(n)=f_{k_n}(n-1)+1$, so $S(n)=S(n-1)+1$ and thus by induction $S(n)=n$.

The second approach to evaluating $S(n)$ (which appears in the original solution) uses binary expansions. Suppose $n$'s binary expansion is $$n=(c_m c_{m-1}\cdots c_1c_0)_2,$$ i.e., $$n=c_m2^m+c_{m-1}2^{m-1}+\cdots+ c_1 2^1+c_0,$$ where each $c_i$ is either $0$ or $1$ and the leading bit $c_m$ is $1$. 

We see that only digits $c_s$ for $s\geq k-1$ contribute, so
\begin{align*}
\floor*{\frac{n}{2^k}+\frac{1}{2}}&=\floor*{\frac{\sum\limits_{s\geq 0} c_s2^s}{2^k}+\frac{1}{2}}\\
&=\floor*{\sum\limits_{s\geq 0} c_s2^{s-k}+\frac{1}{2}}\\
&=\floor*{\sum\limits_{0\leq s\leq k-1} c_s2^{s-k}+\sum\limits_{s\geq k} c_s2^{s-k}+\frac{1}{2}}\\
&=\sum\limits_{s\geq k} c_s2^{s-k}+\floor*{\sum\limits_{0\leq s\leq k-1} c_s2^{s-k}+\frac{1}{2}}\\
&=\sum\limits_{s\geq k} c_s2^{s-k}+c_{k-1}.
\end{align*}
By summing over $k\geq 1$ we get
\begin{align*}
\sum\limits_{k\geq 1}\floor*{\frac{n}{2^k}+\frac{1}{2}}&=\sum\limits_{k\geq 0}c_k+\sum\limits_{k\geq 1}\sum\limits_{s\geq k}c_s2^{s-k}\\
&=\sum\limits_{k\geq 0}c_k+\sum\limits_{s\geq 1}\sum\limits_{1\leq k\leq s}c_s2^{s-k}\\
&=\sum\limits_{k\geq 0}c_k+\sum\limits_{s\geq 1} c_s\frac{2^s-1}{2-1}\\
&=\sum\limits_{k\geq 0}c_k+\sum\limits_{s\geq 1} 2^sc_s-\sum\limits_{s\geq 1}c_s\\
&=\sum\limits_{s\geq 0}2^sc_s\\
&=n.
\end{align*}

More generally, for every base $b$ we have the following identity \cite[Ex.\ 44, p.\ 44]{TAOCP}
\begin{equation}\label{floorsum}
\sum\limits_{k\geq 1}\sum\limits_{0<j<b}\floor*{\frac{n+jb^{k-1}}{b^k}}=n.
\end{equation}
To prove it, we can use the following replication identity \cite[3.26, p.\ 85]{GKP} $$\floor*{x}+\floor*{x+\frac{1}{m}}+\floor*{x+\frac{2}{m}}+\cdots+\floor*{x+\frac{m-1}{m}}=\floor*{mx}$$ which leads to
\begin{align*}
\sum\limits_{k\geq 1}\sum\limits_{0<j<b}\floor*{\frac{n+jb^{k-1}}{b^k}}&=\sum\limits_{k\geq 1}\sum\limits_{0<j<b}\floor*{\frac{n}{b^k}+\frac{j}{b}}\\
&=\sum\limits_{k\geq 1}\left(\floor*{\frac{n}{b^{k-1}}}-\floor*{\frac{n}{b^k}}\right)\\
&=n.
\end{align*}

To move toward to a new refined result (stated in Theorem \ref{theorem1} below) we need more notation. For every base $b\geq 2$ let $n$'s $b$-ary expansion be
$n=\left(c_m c_{m-1}\cdots c_0\right)_b$, that is $$n=c_mb^m+c_{m-1}b^{m-1}+\cdots+c_1b+c_0,$$ where $c_i\in\left\lbrace 0,1,\dots,b-1\right\rbrace$, $m=\floor*{\log_bn}$ and where the leading $b$-ary digit $c_m$ is nonzero. Consider a multiset of $b$-ary digits $\left\lbrace c_0,c_1,\dots,c_m\right\rbrace$ and let $s_b(n)$ denote the sum of all $b$-ary digits of $n$, that is $$s_b(n)=s_b\left(\sum\limits_{s=0}^{\floor*{\log_bn}}c_s b^s\right)=\sum_{s=0}^{\floor*{\log_bn}} c_s.$$ Let $\lambda=\lambda(n,b)=\left( \lambda_1\geq\lambda_2\geq\cdots\lambda_m\right)$ be the partition obtained by writing the $b$-ary digits $\left\lbrace c_0,c_1,\dots,c_m\right\rbrace$ in descending order and let $\lambda'=\lambda'(n,b)=\left( \lambda'_1\geq\lambda'_2\geq\cdots\lambda'_{b-1}\right)$ be the transpose of the partition $\lambda$. This follows Macdonald's notation \cite{MCD}. Note that $\lambda'_j$ is the number of $b$-ary digits greater than or equal to $j$. For example, let $n=1024$ and $b=3$. We have $1024=(1101221)_3$. Then $s_3(1024)=8$, $\lambda=(2\geq 2\geq 1\geq 1\geq 1\geq 1\geq 0)$ and $\lambda'=(6\geq 2)$.
\begin{remark}\label{remark1} Observe that both $\lambda$ and $\lambda'$ are partitions of the number $s_b(n)$, i.e., $$s_b(n)=\left|\lambda\right|=\sum_{0\leq j\leq m}\lambda_j=\sum_{0<j<b}\lambda'_j.$$
\end{remark}

\begin{thm}\label{theorem1}
For every base $b\geq 2$, fixed $j$, $0\leq j<b$, and for every nonnegative integer $n$ we have the following identity: \begin{equation}\label{floorsinglesum}\sum\limits_{k\geq 1}\floor*{\frac{n+jb^{k-1}}{b^k}}=\frac{n-s_b(n)}{b-1}+\lambda'_{b-j}.\end{equation}\end{thm}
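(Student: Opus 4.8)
The plan is to generalize the binary-expansion argument already used above for $S(n)$. Fix $b$, $j$, $n$, and write $n=\sum_{s\ge 0}c_sb^s$ for the $b$-ary expansion. As a first step I would rewrite the general summand as $\floor*{\frac{n+jb^{k-1}}{b^k}}=\floor*{\frac{n}{b^k}+\frac{j}{b}}$ and split $\frac{n}{b^k}$ into its integer part $\sum_{s\ge k}c_sb^{s-k}$ and its fractional part $r_k:=\sum_{0\le s\le k-1}c_sb^{s-k}=\fr*{n/b^k}$, which satisfies $0\le r_k<1$. Pulling the integer part out of the floor gives
$$\floor*{\frac{n+jb^{k-1}}{b^k}}=\sum_{s\ge k}c_sb^{s-k}+\floor*{r_k+\frac{j}{b}}.$$

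The crux is to identify $\floor*{r_k+\frac{j}{b}}$. Since both $r_k$ and $j/b$ lie in $[0,1)$, this floor is either $0$ or $1$, and it equals $1$ exactly when $r_k+\frac{j}{b}\ge 1$. Multiplying through by $b^k$, this is equivalent to $(c_{k-1}c_{k-2}\cdots c_0)_b\ge (b-j)b^{k-1}$. The left-hand side is a $k$-digit base-$b$ integer, hence lies in $[0,b^k)$, and a short estimate shows it is $\ge(b-j)b^{k-1}$ if and only if its leading digit $c_{k-1}$ is $\ge b-j$: if $c_{k-1}\ge b-j$ the inequality is immediate, while if $c_{k-1}\le b-j-1$ then $(c_{k-1}c_{k-2}\cdots c_0)_b\le(c_{k-1}+1)b^{k-1}-1<(b-j)b^{k-1}$. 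Hence $\floor*{r_k+\frac{j}{b}}=\left[c_{k-1}\ge b-j\right]$. This is the step I expect to require the most care, although as just sketched it reduces to an elementary two-case estimate; note that for $b=2$, $j=1$ it recovers the identity $\floor*{n/2^k+1/2}=\sum_{s\ge k}c_s2^{s-k}+c_{k-1}$ used above.

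It then remains to sum over $k\ge1$. The integer parts contribute $\sum_{k\ge1}\sum_{s\ge k}c_sb^{s-k}=\sum_{s\ge1}c_s\sum_{1\le k\le s}b^{s-k}=\sum_{s\ge1}c_s\frac{b^s-1}{b-1}$, which simplifies (exactly as in the computation of $S(n)$ above) to $\frac{1}{b-1}\left(\sum_{s\ge1}c_sb^s-\sum_{s\ge1}c_s\right)=\frac{n-s_b(n)}{b-1}$. The Iverson terms contribute $\sum_{k\ge1}\left[c_{k-1}\ge b-j\right]=\sum_{k\ge0}\left[c_k\ge b-j\right]$, which is precisely the number of $b$-ary digits of $n$ that are $\ge b-j$, i.e.\ $\lambda'_{b-j}$ by definition (with the convention $\lambda'_b=0$, which covers the case $j=0$). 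Adding the two contributions gives \eqref{floorsinglesum}; the degenerate case $n=0$ is checked directly, both sides being $0$.
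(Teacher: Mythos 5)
Your proposal is correct and follows essentially the same route as the paper: the same rewriting $\floor*{\frac{n+jb^{k-1}}{b^k}}=\floor*{\frac{n}{b^k}+\frac{j}{b}}$, the same split into the integer part $\sum_{s\ge k}c_sb^{s-k}$ plus a residual floor identified as the Iverson bracket $\left[c_{k-1}\ge b-j\right]$, and the same final summation yielding $\frac{n-s_b(n)}{b-1}+\lambda'_{b-j}$. The only difference is cosmetic: you prove the key digit identification by clearing denominators and comparing $(c_{k-1}\cdots c_0)_b$ with $(b-j)b^{k-1}$, whereas the paper's Lemma~\ref{lemma} does it by bounding the telescoping geometric sums; your handling of the $j=0$ convention $\lambda'_b=0$ and of $n=0$ is also fine.
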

For $j=0$ and $b=p$ prime one has the well-known number-theoretic formula $$\nu_p(n!)=\sum\limits_{k\geq 1}\floor*{\frac{n}{p^k}}=\frac{n-s_p(n)}{p-1},$$ where $\nu_p(m)$ is the multiplicity of $p$ in the factorization of $m$.

First we state and prove an auxiliary result.
\begin{lem} \label{lemma}
For every base $b$, let $n$'s $b$-ary expansion be $n=\left(c_mc_{m-1}\cdots c_0\right)_b$ with $0\leq c_s\leq b-1$ for every $s$ and $0\leq j<b$. Then for every $k$ $$\floor*{\frac{c_k+j }{b}+\frac{c_{k-1}}{b^2}+\cdots+\frac{c_{0}}{b^{k+1}}}=\begin{cases} 0, &\text{if $c_k+j<b$;}\\
1, &\text{if $c_k+j\geq b$.} \end{cases}$$ \end{lem}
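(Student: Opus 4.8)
The plan is to estimate the quantity inside the floor and show it always lies in $[0,2)$, with the threshold at $1$ governed precisely by whether $c_k+j<b$ or $c_k+j\ge b$. Write $T = \frac{c_k+j}{b}+\frac{c_{k-1}}{b^2}+\cdots+\frac{c_0}{b^{k+1}}$. The tail $\frac{c_{k-1}}{b^2}+\cdots+\frac{c_0}{b^{k+1}}$ is a sum of nonnegative terms, so $T \ge \frac{c_k+j}{b}\ge 0$; on the other hand, since each $c_s\le b-1$, the tail is bounded above by $(b-1)\bigl(\frac{1}{b^2}+\cdots+\frac{1}{b^{k+1}}\bigr) < (b-1)\cdot\frac{1}{b^2}\cdot\frac{1}{1-1/b} = \frac{b-1}{b(b-1)} = \frac{1}{b}$. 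Hence $\frac{c_k+j}{b}\le T < \frac{c_k+j}{b}+\frac{1}{b} = \frac{c_k+j+1}{b}$.

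Now split on the two cases. If $c_k+j<b$, i.e.\ $c_k+j\le b-1$, then $\frac{c_k+j+1}{b}\le 1$, so $0\le T<1$ and $\floor{T}=0$. If $c_k+j\ge b$, then because $c_k\le b-1$ and $j\le b-1$ we have $c_k+j\le 2b-2$, so $\frac{c_k+j}{b}\ge 1$ gives $T\ge 1$, while $\frac{c_k+j+1}{b}\le\frac{2b-1}{b}<2$ gives $T<2$; hence $\floor{T}=1$. This establishes the claimed dichotomy.

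I expect no serious obstacle here: the only point requiring a little care is the strict inequality in the geometric-tail bound, which is exactly what forces $T$ below $1$ in the first case (if one only had $T\le 1$ one could not conclude $\floor{T}=0$). The bound is strict as long as $k\ge 1$ so that the tail is a genuine nonempty sum with a finite geometric series strictly less than $\frac{1}{b}$; and when $k=0$ the tail is empty, $T=\frac{c_0+j}{b}$ exactly, and the same case analysis applies directly (indeed more cleanly). So the argument is uniform in $k$, and the lemma follows.
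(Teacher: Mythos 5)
Your argument is correct and is essentially the paper's own: both trap the quantity using the digit bound $c_s\le b-1$ and a geometric-series estimate, then split on $c_k+j<b$ versus $c_k+j\ge b$. Your packaging of the estimate as a single strict tail bound $\text{tail}<\frac{1}{b}$ (with the $k=0$ case noted separately) is just a slightly tidier way of writing the same telescoping bounds the paper uses in each case.
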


\begin{proof}[Proof of Lemma \ref{lemma}] Since
\begin{align*}
\floor*{\frac{c_k+j}{b}+\frac{c_{k-1}}{b^2}+\cdots+\frac{c_{0}}{b^{k+1}}}&\leq\floor*{\frac{2b-2}{b}+\frac{b-1}{b^2}+\cdots+\frac{b-1}{b^{k+1}}}\\
&=\floor*{2-\frac{2}{b}+\frac{1}{b}-\frac{1}{b^2}+\frac{1}{b^2}-\frac{1}{b^3}+\cdots+\frac{1}{b^k}-\frac{1}{b^{k+1}}}\\
&=\floor*{2-\frac{1}{b}-\frac{1}{b^{k+1}}}\\
&<2,
\end{align*}
the integer part $\floor*{\frac{c_k+j}{b}+\frac{c_{k-1}}{b^2}+\cdots+\frac{c_{0}}{b^{k+1}}}$ is either $0$ or $1$.

For $c_k+j\geq b$ we have $\floor*{\frac{c_k+j}{b}+\frac{c_{k-1}}{b^2}+\cdots+\frac{c_{0}}{b^{k+1}}}=1$, and if $c_k+j<b$ we have $c_k+j\leq b-1$, thus
\begin{align*}
0\leq\floor*{\frac{c_k+j}{b}+\frac{c_{k-1}}{b^2}+\cdots+\frac{c_{0}}{b^{k+1}}}&\leq\floor*{\frac{b-1}{b}+\frac{b-1}{b^2}+\cdots+\frac{b-1}{b^{k+1}}}\\
&=\floor*{1-\frac{1}{b}+\frac{1}{b}-\frac{1}{b^{k-1}}}\\
&=\floor*{1-\frac{1}{b^{k+1}}}\\
&=0.
\end{align*}
\end{proof}

\begin{proof} [Proof of Theorem \ref{theorem1}] By substituting $n$ with the $b$-ary expansion of $n$ and by using the Lemma \ref{lemma} we obtain 
\begin{align*}
\sum\limits_{k\geq 1}\floor*{\frac{n+jb^{k-1}}{b^k}}&=\sum\limits_{k\geq 1}\floor*{\frac{n}{b^k}+\frac{j}{b}}\\
&=\sum\limits_{k\geq 1}\floor*{\sum\limits_{s\geq 0}\frac{c_sb^s}{b^k}+\frac{j}{b}}\\
&=\sum\limits_{k\geq 1}\floor*{\sum\limits_{s\geq k}c_sb^{s-k}+\frac{c_{k-1}+j}{b}+\frac{c_{k-2}}{b^2}+\cdots+\frac{c_{0}}{b^k}}\\
&=\sum\limits_{k\geq 1}\sum\limits_{s\geq k}c_sb^{s-k}+\sum\limits_{k\geq 1}\floor*{\frac{c_{k-1}+j}{b}+\frac{c_{k-2}}{b^2}+\cdots+\frac{c_{0}}{b^k}}\\
&=\sum\limits_{k\geq 1}\sum\limits_{s\geq k}c_sb^{s-k}+\sum\limits_{k\geq 1}\left[c_{k-1}+j\geq b\right]\, & \left(\text{by Lemma \ref{lemma}}\right)\\
&=\sum\limits_{s\geq 1}c_s\left(b^0+b^1+\cdots+b^{s-1}\right)+\sum\limits_{s\geq 0}\left[c_{s}\geq b-j\right]\\
&=\sum\limits_{s\geq 1}c_s\cdot \frac{b^s-1}{b-1}+\lambda'_{b-j}\\
&=\frac{1}{b-1}\left(\sum\limits_{s\geq 1}c_sb^s+c_0-c_0-\sum\limits_{s\geq 1}c_s\right)+\lambda'_{b-j}\\
&=\frac{1}{b-1}\left(\sum\limits_{s\geq 0}c_sb^s-\sum\limits_{s\geq 0}c_s\right)+\lambda'_{b-j}\\
&=\frac{n-s_b(n)}{b-1}+\lambda'_{b-j}.
\end{align*}
\end{proof}

The double-sum Formula \eqref{floorsum} then follows immediately:
\begin{cor}For every base $b\geq 2$ and for every nonnegative integer $n$ we have the following identity: $$\sum\limits_{k\geq 1}\sum\limits_{0<j<b}\floor*{\frac{n+jb^{k-1}}{b^{k}}}=n.$$ \end{cor}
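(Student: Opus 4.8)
The plan is to deduce the double sum directly from Theorem~\ref{theorem1} by summing over the free parameter $j$. For each fixed $k$ the inner sum $\sum_{0<j<b}\floor*{\frac{n+jb^{k-1}}{b^{k}}}$ has only $b-1$ terms, and for each fixed $j$ only finitely many $k$ produce a nonzero summand, so the double sum is a finite sum of nonnegative integers and its order of summation may be exchanged freely. Thus I would first rewrite
$$\sum_{k\geq 1}\sum_{0<j<b}\floor*{\frac{n+jb^{k-1}}{b^{k}}}=\sum_{0<j<b}\sum_{k\geq 1}\floor*{\frac{n+jb^{k-1}}{b^{k}}},$$
and then apply the closed form \eqref{floorsinglesum} to each inner sum.

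Substituting from Theorem~\ref{theorem1} gives
$$\sum_{0<j<b}\left(\frac{n-s_b(n)}{b-1}+\lambda'_{b-j}\right)=(b-1)\cdot\frac{n-s_b(n)}{b-1}+\sum_{0<j<b}\lambda'_{b-j}=\bigl(n-s_b(n)\bigr)+\sum_{0<j<b}\lambda'_{b-j}.$$
As $j$ ranges over $1,\dots,b-1$ the index $b-j$ ranges over $b-1,\dots,1$, so $\sum_{0<j<b}\lambda'_{b-j}=\sum_{0<i<b}\lambda'_i=\lvert\lambda'\rvert$, which by Remark~\ref{remark1} equals $s_b(n)$. Hence the expression collapses to $n-s_b(n)+s_b(n)=n$, which is \eqref{floorsum}.

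There is no genuine obstacle here: the only points requiring a word of care are the legitimacy of interchanging the two summations — immediate from the finiteness and nonnegativity of the terms — and the bookkeeping of the reindexing $j\mapsto b-j$ together with the identity $\lvert\lambda'\rvert=s_b(n)$ recorded in Remark~\ref{remark1}. Conceptually, the content of the corollary is entirely contained in Theorem~\ref{theorem1}: the refinement \eqref{floorsinglesum} is strictly stronger, since it isolates the contribution of each individual $j$, and summing these contributions makes the digit-sum correction term $\tfrac{n-s_b(n)}{b-1}$ reassemble while the terms $\lambda'_{b-j}$ add up to exactly $s_b(n)$, cancelling the correction.
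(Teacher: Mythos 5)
Your proposal is correct and follows essentially the same route as the paper: interchange the order of summation, apply Theorem~\ref{theorem1} to each inner sum, and use Remark~\ref{remark1} to identify $\sum_{0<j<b}\lambda'_{b-j}$ with $s_b(n)$, which cancels the correction term. The only difference is that you spell out the justification for the interchange and the reindexing $j\mapsto b-j$, which the paper leaves implicit.
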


\begin{proof} By changing the order of summation the left-hand side is equal to
\begin{align*}
\sum\limits_{k\geq 1}\sum\limits_{0<j<b}\floor*{\frac{n+j b^{k-1}}{b^{k}}}&=\sum\limits_{0<j<b}\sum\limits_{k\geq 1}\floor*{\frac{n+j b^{k-1}}{b^{k}}}\\
&=\sum\limits_{0<j<b}\left(\frac{n-s_b(n)}{b-1}+\lambda_{b-j}'\right) \;&\left(\text{by Theorem \ref{theorem1}}\right)\\
&=n-s_b(n)+\left(\lambda_{b-1}+\cdots+\lambda_{1} \right)\\
&=n.& \left(\text{by Remark \ref{remark1}}\right)
\end{align*}
\end{proof}

\section{Sums involving ceiling function}

Now we turn our attention to the well-known ceiling analogue of the Formula \eqref{floorsum}. A double sum \cite[Ex.\ 39, p.\ 99]{GKP} $$\sum\limits_{0\leq k\leq \log_bx}\sum\limits_{0<j<b}\ceil*{\frac{x+jb^k}{b^{k+1}}}$$ is equal to $(b-1)\left(\floor*{\log_bx}+1\right)+\ceil*{x}-1$ for every real number $x\geq 1$ and every integer $b\geq 2$. 

More generally, we state the following refinement of this formula. Again, let $b\geq 2$ be any base, let $n:=\ceil*{x}$ and let $n=\left(c_m c_{m-1}\cdots c_0\right)_b$ be $n$'s $b$-ary expansion, $\lambda=\lambda(n,b)$ and $\lambda'=\lambda'(n,b)$, as before, partition of the multiset of the $b$-ary digits of $n$ and its transpose. Note that $m=\floor*{\log_bn}$ is the position of the leading digit in $n$'s $b$-ary expansion. We also define $\nu_b(n):=\text{max}\left\lbrace k \:|\:b^k\backslash n\right\rbrace$. We shall evaluate the sum for every fixed $j$, $0<j<b$. 

\begin{thm}\label{theorem2} For every base $b\geq 2$, fixed $j$, $0<j<b$, and for every real $x\geq 1$ we have the following identity:\begin{equation} \label{ceilsinglesum} 
\sum\limits_{0\leq k\leq \log_bx}\ceil*{\frac{x+jb^k}{b^{k+1}}}=\frac{n-s_b\left(n\right)}{b-1}+m+\lambda'_{b-j}+\left[c_{\nu_b(n)}\neq b-j\right],\end{equation} where $n=\ceil*{x}$ and $m=\floor{\log_bn}$ is the position of the leading digit in $n$'s $b$-ary expansion. \end{thm}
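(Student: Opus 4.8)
The plan is to reduce the real-argument ceiling sum to one with integer argument $n=\ceil*{x}$, and then rewrite each ceiling as a floor plus an integrality indicator, so that Theorem~\ref{theorem1} applies termwise.

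First I would note that each summand depends only on $n=\ceil*{x}$, not on $x$: for every $k\ge0$,
\[
\ceil*{\frac{x+jb^k}{b^{k+1}}}=\ceil*{\frac{n+jb^k}{b^{k+1}}}.
\]
Indeed, since $x\le n$, these ceilings can differ only if an integer $q$ satisfies $\frac{x+jb^k}{b^{k+1}}\le q<\frac{n+jb^k}{b^{k+1}}$, i.e.\ $x\le b^k(qb-j)<n$; but $b^k(qb-j)$ is an integer and no integer lies in $[x,\ceil*{x})$. Hence the left side of \eqref{ceilsinglesum} equals $\sum_{0\le k\le\log_bx}\ceil*{\frac{n+jb^k}{b^{k+1}}}$. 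The upper limit $\floor*{\log_bx}$ coincides with $m=\floor*{\log_bn}$ except in the borderline case where $n$ is a power of $b$ and $x\notin\mathbb Z$, which I would dispose of by a short separate computation; otherwise $k$ runs over $0\le k\le m$.

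Next, using $\ceil*{y}=\floor*{y}+[\,y\notin\mathbb Z\,]$, split
\[
\sum_{k=0}^{m}\ceil*{\frac{n+jb^k}{b^{k+1}}}=\sum_{k=0}^{m}\floor*{\frac{n+jb^k}{b^{k+1}}}+\sum_{k=0}^{m}\bigl[\,b^{k+1}\nmid n+jb^k\,\bigr].
\]
In the first sum the terms with $k\ge m+1$ vanish, since $\tfrac{n}{b^{k+1}}<\tfrac1b$ and $\tfrac jb\le\tfrac{b-1}{b}$ force the argument below $1$; so after the substitution $k\mapsto k-1$ it equals $\sum_{k\ge1}\floor*{\frac{n+jb^{k-1}}{b^k}}=\frac{n-s_b(n)}{b-1}+\lambda'_{b-j}$ by Theorem~\ref{theorem1}. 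For the second sum, writing $n=\sum_s c_sb^s$ and reducing modulo $b^{k+1}$ shows that $b^{k+1}\mid n+jb^k$ forces $c_0=\dots=c_{k-1}=0$ and $b\mid c_k+j$; since $0\le c_k+j\le 2b-2$ and $j\ge1$, this means $c_k=b-j$, hence $k=\nu_b(n)$. Thus at most one index, $k=\nu_b(n)\le m$, can contribute, and it does exactly when $c_{\nu_b(n)}=b-j$; therefore $\sum_{k=0}^{m}[\,b^{k+1}\nmid n+jb^k\,]=(m+1)-[\,c_{\nu_b(n)}=b-j\,]=m+[\,c_{\nu_b(n)}\ne b-j\,]$. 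Adding the two pieces gives precisely the right side of \eqref{ceilsinglesum}.

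The floor estimates and the re-indexing are routine; the step I expect to be the main obstacle is the exact description of when $b^{k+1}$ divides $n+jb^k$, as this is what forces the valuation $\nu_b(n)$ and the single digit $c_{\nu_b(n)}$ into the closed form, and a secondary point is reconciling the summation range $0\le k\le\log_bx$ with the parameter $m=\floor*{\log_bn}$ in the borderline case $\ceil*{x}=b^m\ne x$. An alternative to the integrality indicator is to use the elementary identity $\ceil*{p/q}=\floor*{(p-1)/q}+1$ for positive integers and apply Theorem~\ref{theorem1} to $n-1$; this works too, but then one must additionally track how subtracting $1$ changes $s_b$ and $\lambda'$, a change which after simplification contributes exactly $\nu_b(n)$ and $[\,c_{\nu_b(n)}=b-j\,]$.
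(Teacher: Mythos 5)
Your proposal is correct in its main line, and it takes a genuinely different route from the paper. The paper never converts ceilings to floors: it first replaces $x$ by $n=\ceil*{x}$ (citing the general rule for continuous increasing $f$ with $f(x)\in\mathbb{Z}\Rightarrow x\in\mathbb{Z}$, where you argue the same reduction directly and correctly), then splits each term at the digit level as $\sum_{s\geq k+1}c_sb^{s-k-1}+\ceil*{\frac{c_k+j}{b}+\frac{c_{k-1}}{b^2}+\cdots+\frac{c_0}{b^{k+1}}}$, evaluates the first piece by a geometric series to get $\frac{n-s_b(n)}{b-1}$, and shows by case analysis that each leftover ceiling is $1$ or $2$, the value $1$ occurring exactly when $c_k+j<b$ or when $c_k+j=b$ with all lower digits zero (i.e.\ $k=\nu_b(n)$, $c_k=b-j$); this yields $m+1+\lambda'_{b-j}-\left[c_{\nu_b(n)}=b-j\right]$. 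You instead write $\ceil*{y}=\floor*{y}+\left[y\notin\mathbb{Z}\right]$, extend the floor sum to all $k\geq 0$ (legitimate, since the extra floors vanish), quote Theorem \ref{theorem1} as a black box, and reduce the correction term to the exact characterization of when $b^{k+1}\mid n+jb^k$, which you carry out correctly ($c_0=\cdots=c_{k-1}=0$ and $c_k=b-j$, forcing $k=\nu_b(n)\leq m$). Your route is shorter because it reuses Theorem \ref{theorem1} instead of repeating its digit computation; the paper's version is self-contained and never needs the divisibility analysis.

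One caveat: the borderline case you set aside ($x\notin\mathbb{Z}$ and $n=\ceil*{x}$ a power of $b$, so that $\floor*{\log_bx}=m-1<m$) cannot be disposed of in favour of the stated identity. There the sum is missing the $k=m$ term, which equals $\ceil*{\frac{n+jb^m}{b^{m+1}}}=1$, so the left side falls short of the right side by exactly $1$; for instance $b=2$, $j=1$, $x=3.5$ gives $3+2=5$ on the left but $3+2+1+0=6$ on the right. The paper's own proof silently replaces the range $0\leq k\leq\log_bx$ by $0\leq k\leq m$ and overlooks this, so the defect lies in the statement rather than in your argument; your ``short separate computation'' would surface the needed fix (take the upper limit to be $m$, or assume $\floor*{\log_bx}=\floor*{\log_b\ceil*{x}}$, e.g.\ $x$ an integer). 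Apart from making that restriction explicit, your plan is sound and complete.
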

\begin{proof}

Let $b\geq 2$, $0<j<b$ and $0\leq k\leq \log_bx$ be integers and $f(x)=\frac{x+jb^k}{b^{k+1}}$. Then $f$ is a continuous, monotonically increasing function. Suppose $f(x)=z\in\mathbb{Z}$. Then we have $x=zb^{k+1}-jb^k\in\mathbb{Z}$ and we can conclude $\ceil*{f(x)}=\ceil*{f\left(\ceil*{x}\right)}$ since $f$ satisfies the necessary conditions \cite[(3.10), p. 71]{GKP}. Then  
$$\sum\limits_{0\leq k\leq \log_bx}\ceil*{\frac{x+jb^k}{b^{k+1}}}=\sum\limits_{0\leq k\leq \log_bx}\ceil*{\frac{n+jb^k}{b^{k+1}}}.$$

Suppose $n$'s $b$-ary expansion is $$n=(c_m c_{m-1}\cdots c_1 c_0)_b,$$ i.e., $$n=c_mb^m+c_{m-1}b^{m-1}+\cdots+ c_1b^1+c_0,$$ where each $c_s\in\left\lbrace 0,1,\dots ,b-1\right\rbrace$ and the leading digit $c_m$ is nonzero. 

\begin{align*}
\sum\limits_{0\leq k\leq \log_bx}\ceil*{\frac{x+jb^k}{b^{k+1}}}&=\sum\limits_{0\leq k\leq \log_bx}\ceil*{\frac{n}{b^{k+1}}+\frac{j}{b}}\\
&=\sum\limits_{0\leq k\leq \log_bx}\ceil*{\sum\limits_{s\geq 0}\frac{c_sb^s}{b^{k+1}}+\frac{j}{b}}\\
&=\sum\limits_{0\leq k\leq \log_bx}\ceil*{\sum\limits_{s\geq k+1}\frac{c_sb^s}{b^{k+1}}+\sum\limits_{0\leq s\leq k}\frac{c_sb^s}{b^{k+1}}+\frac{j}{b}}\\
&=\sum\limits_{0\leq k\leq \log_bx}\sum\limits_{s\geq k+1}c_sb^{s-k-1}+\sum\limits_{0\leq k\leq \log_bx}\ceil*{\frac{c_k+j}{b}+\frac{c_{k-1}}{b^2}+\cdots+\frac{c_0}{b^{k+1}}}.
\end{align*}
By changing the order of summation, the first (double) sum is equal to
\begin{align*}
\sum\limits_{0\leq k\leq \log_bx}\sum\limits_{s\geq k+1}c_sb^{s-k-1}&=\sum\limits_{1 \leq s \leq\log_bx+1}c_sb^{s-1}\sum\limits_{0\leq k\leq s-1}b^{-k}\\
&=\sum\limits_{1 \leq s \leq\log_bx+1}c_sb^{s-1}\cdot\frac{1-b^{-s}}{1-b^{-1}}\\
&=\sum\limits_{1 \leq s \leq\log_bx+1}c_s\cdot\frac{b^{s-1}-b^{-1}}{1-b^{-1}}\\
&=\sum\limits_{1 \leq s \leq\log_bx+1}c_s\cdot\frac{b^{s}-1}{b-1}\\
&=\frac{1}{b-1}\left(\sum\limits_{1 \leq s \leq\log_bx+1}c_s b^s-\sum\limits_{1 \leq s \leq\log_bx+1}c_s\right)\\
&=\frac{n-s_b\left(n\right)}{b-1}.
\end{align*}
The second (single) sum is bounded by
\begin{align*}
\ceil*{\frac{c_k+j}{b}+\frac{c_{k-1}}{b^2}+\cdots+\frac{c_0}{b^{k+1}}}&\leq\ceil*{\frac{2b-2}{b}+\frac{b-1}{b^2}+\cdots+\frac{b-1}{b^{k+1}}}\\
&=\ceil*{2-\frac{2}{b}+\frac{1}{b}-\frac{1}{b^2}+\frac{1}{b^2}-\frac{1}{b^3}+\cdots+\frac{1}{b^k}-\frac{1}{b^{k+1}}}\\
&=\ceil*{2-\frac{1}{b}-\frac{1}{b^{k+1}}}\\
&\leq 2.
\end{align*}

Hence, $\ceil*{\frac{c_k+j}{b}+\frac{c_{k-1}}{b^2}+\cdots+\frac{c_0}{b^{k+1}}}$ is either $1$ or $2$.

For $c_k+j>b$ we have $\ceil*{\frac{c_k+j}{b}+\frac{c_{k-1}}{b^2}+\cdots+\frac{c_0}{b^{k+1}}}=2$. For $c_k+j<b$ we have $c_k+j\leq b-1$ and
\begin{align*}
1\leq\ceil*{\frac{c_k+j}{b}+\frac{c_{k-1}}{b^2}+\cdots+\frac{c_0}{b^{k+1}}}&\leq\ceil*{\frac{b-1}{b}+\frac{b-1}{b^2}+\cdots+\frac{b-1}{b^{k+1}}}\\
&=\ceil*{1-\frac{1}{b}+\frac{1}{b}-\frac{1}{b^2}+\frac{1}{b^2}-\frac{1}{b^3}+\cdots+\frac{1}{b^k}-\frac{1}{b^{k+1}}}\\
&=\ceil*{1-\frac{1}{b^{k-1}}}\\
&=1.
\end{align*}
For $c_k+j=b$ and $c_{k-1}=c_{k-2}=\cdots =c_0=0$ we have that $b^k$ exactly divides $n$, so $\nu_b(n)=k$  and $$\ceil*{\frac{c_k+j}{b}+\frac{c_{k-1}}{b^2}+\cdots+\frac{c_0}{b^{k+1}}}=1.$$ For at least one $c_s>0$ with $0\leq s\leq k-1$ we have  $\ceil*{\frac{c_k+j}{b}+\frac{c_{k-1}}{b^2}+\cdots+\frac{c_0}{b^{k+1}}}=2$. So:
\begin{align*}
\sum\limits_{0\leq k\leq \log_bx}\ceil*{\frac{c_k+j}{b}+\frac{c_{k-1}}{b^2}+\cdots+\frac{c_0}{b^{k+1}}}&=m+1+\lambda'_{b-j}-\left[c_{\nu_b(n)}=b-j\right]\\
&=m+\lambda'_{b-j}+\left[c_{\nu_b(n)}\neq b-j\right].
\end{align*}
Finally, we have $$\sum\limits_{0\leq k\leq \log_bx}\ceil*{\frac{x+jb^k}{b^{k+1}}}=\frac{n-s_b\left(n\right)}{b-1}+m+\lambda'_{b-j}+\left[c_{\nu_b(n)}\neq b-j\right].$$
\end{proof}

\begin{cor}For every base $b\geq 2$ and for every real $x\geq 1$ we have the following identity: $$\sum\limits_{0\leq k\leq \log_bx}\sum\limits_{0<j<b}\ceil*{\frac{x+jb^k}{b^{k+1}}}=(b-1)\left(m+1\right)+n-1,$$
where $n=\ceil*{x}$ and $m=\floor{\log_bn}$ is the position of the leading digit in $n$'s $b$-ary expansion.\end{cor}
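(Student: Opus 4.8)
The plan is to deduce this corollary directly from Theorem~\ref{theorem2} by summing the single-sum identity \eqref{ceilsinglesum} over $j$ with $0<j<b$, exactly as the analogous corollary in Section~\ref{sec:floor} was obtained from Theorem~\ref{theorem1}. First I would swap the order of summation to write
\begin{align*}
\sum\limits_{0\leq k\leq \log_bx}\sum\limits_{0<j<b}\ceil*{\frac{x+jb^k}{b^{k+1}}}&=\sum\limits_{0<j<b}\sum\limits_{0\leq k\leq \log_bx}\ceil*{\frac{x+jb^k}{b^{k+1}}}\\
&=\sum\limits_{0<j<b}\left(\frac{n-s_b(n)}{b-1}+m+\lambda'_{b-j}+\left[c_{\nu_b(n)}\neq b-j\right]\right).
\end{align*}
The first two terms contribute $(b-1)\left(\frac{n-s_b(n)}{b-1}+m\right)=n-s_b(n)+(b-1)m$, since there are $b-1$ values of $j$. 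The term $\sum_{0<j<b}\lambda'_{b-j}$ is just $\lambda'_1+\cdots+\lambda'_{b-1}=s_b(n)$ by Remark~\ref{remark1}, so the $-s_b(n)$ cancels and we are left with $n+(b-1)m$ plus the contribution of the Iverson bracket term.

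The only point needing a sentence of justification is the sum $\sum_{0<j<b}\left[c_{\nu_b(n)}\neq b-j\right]$. As $j$ ranges over $\{1,2,\dots,b-1\}$, the quantity $b-j$ ranges over $\{1,2,\dots,b-1\}$, so this sum counts the number of values $i\in\{1,\dots,b-1\}$ with $c_{\nu_b(n)}\neq i$; equivalently it equals $(b-1)-\#\{i\in\{1,\dots,b-1\}:c_{\nu_b(n)}=i\}$. Now $c_{\nu_b(n)}$ is the lowest nonzero $b$-ary digit of $n$ (since $\nu_b(n)$ is the exact power of $b$ dividing $n$), hence $c_{\nu_b(n)}\in\{1,\dots,b-1\}$ and is matched by exactly one such $i$. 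Therefore $\sum_{0<j<b}\left[c_{\nu_b(n)}\neq b-j\right]=b-2$, and collecting terms gives
$$\sum\limits_{0\leq k\leq \log_bx}\sum\limits_{0<j<b}\ceil*{\frac{x+jb^k}{b^{k+1}}}=n+(b-1)m+(b-2)=(b-1)(m+1)+n-1,$$
as claimed. (One should note that for $x\geq 1$ we have $n=\ceil*{x}\geq 1$, so $n$ indeed has a nonzero digit and $\nu_b(n)$ is well defined.)

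There is essentially no obstacle here: the computation is entirely routine once Theorem~\ref{theorem2} is in hand, and the only mild subtlety is keeping straight that $c_{\nu_b(n)}$ is nonzero so that the Iverson-bracket sum evaluates to $b-2$ rather than $b-1$. If one prefers, the same result can alternatively be cross-checked against the known closed form $(b-1)\left(\floor*{\log_bx}+1\right)+\ceil*{x}-1$ quoted before the theorem, using $m=\floor*{\log_bn}=\floor*{\log_bx}$ for $x\geq 1$ with $n=\ceil*{x}$ (the equality $\floor*{\log_bn}=\floor*{\log_b x}$ holds because there is no integer power of $b$ strictly between $x$ and $\ceil*{x}$ unless $x$ itself is such a power, in which case $x=n$).
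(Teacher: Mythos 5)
Your proof is correct and follows essentially the same route as the paper: sum the single-sum identity of Theorem \ref{theorem2} over $0<j<b$, use Remark \ref{remark1} to evaluate $\sum_{0<j<b}\lambda'_{b-j}=s_b(n)$, and use the fact that $c_{\nu_b(n)}\neq 0$ to evaluate the Iverson-bracket sum (the paper counts $\sum_{0<j<b}\left[c_{\nu_b(n)}=b-j\right]=1$, which is trivially equivalent to your $\sum_{0<j<b}\left[c_{\nu_b(n)}\neq b-j\right]=b-2$). No gaps.
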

\begin{proof} 
\begin{align*}
\sum\limits_{0\leq k\leq \log_bx}\hspace{0.5mm}\sum\limits_{0<j<b}\ceil*{\frac{x+jb^k}{b^{k+1}}}&=\sum\limits_{0<j<b}\hspace{0.5mm}\sum\limits_{0\leq k\leq \log_bx}\ceil*{\frac{x+jb^k}{b^{k+1}}}\\
&=\sum\limits_{0<j<b}\left(\frac{n-s_b\left(n\right)}{b-1}+m+1+\lambda'_{b-j}-\left[c_{\nu_b(n)}=b-j\right]\right)\\
&=n-s_b\left(n\right)+(b-1)\left(m+1\right)+\sum\limits_{0<j<b}\left(\lambda'_{b-j}-\left[c_{\nu_b(n)}=b-j\right]\right).
\end{align*}
It remains to prove that $$\sum\limits_{0<j<b}\left(\lambda'_{b-j}-\left[c_{\nu_b(n)}=b-j\right]\right)=s_b\left(n\right)-1.$$
By Remark \ref{remark1}, $\sum\limits_{0<j<b}\lambda'_{b-j}=s_b\left(n\right)$. We can also see that $$\sum\limits_{0<j<b}\left[c_{\nu_b(n)}=b-j\right]=1.$$ This is true because there is exactly one $k$ that $\nu_b(n)=k$. Since $c_{\nu_b(n)}>0$ and $j$ is ranging from $1$ to $b-1$, the required digit that makes expression $\left[c_{\nu_b(n)}=b-j\right]$ equal to $1$ must also occur and also exactly once. 

Finally, we have
\begin{align*}
\sum\limits_{0\leq k\leq \log_bx}\hspace{0.5mm}\sum\limits_{0<j<b}\ceil*{\frac{x+jb^k}{b^{k+1}}}&=n-s_b\left(n\right)+(b-1)\left(m+1\right)+\sum\limits_{0<j<b}\left(\lambda'_{b-j}-\left[c_{\nu_b(n)}=b-j\right]\right)\\
&=n-s_b\left(n\right)+(b-1)\left(m+1\right)+s_b\left(n\right)-1\\
&=(b-1)\left(m+1\right)+n-1.\end{align*} \end{proof}

\section{Sums involving fractional part function}\label{sec:frac}
In the following theorem we obtain the fractional analogue of the floor and ceiling Formulae \eqref{floorsinglesum} and \eqref{ceilsinglesum}.
\begin{thm}\label{theorem3}
For every base $b\geq 2$, fixed $j$, $0<j<b$, and for every nonnegative integer $n$ we have the following identity:
$$\sum\limits_{0\leq k\leq \log_bn}\fr*{\frac{n+jb^k}{b^{k+1}}}=\dfrac{1}{b-1}\left(s_b(n)-\dfrac{n}{b^{m+1}}\right)+(m+1)\frac{j}{b}-\lambda_{b-j}',$$ where $m=\floor{\log_bn}$ is the position of the leading digit in $n$'s $b$-ary expansion. 
\end{thm}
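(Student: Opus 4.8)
The plan is to reduce the fractional-part sum to the floor sums already computed, using the defining relation $\fr*{y}=y-\floor*{y}$ termwise. Concretely, I would write
\begin{align*}
\sum\limits_{0\leq k\leq \log_bn}\fr*{\frac{n+jb^k}{b^{k+1}}}=\sum\limits_{0\leq k\leq m}\frac{n+jb^k}{b^{k+1}}-\sum\limits_{0\leq k\leq m}\floor*{\frac{n+jb^k}{b^{k+1}}},
\end{align*}
so the task splits into an elementary geometric-series computation of the first sum and an identification of the second sum with a shifted version of the floor sum from Theorem \ref{theorem1}.

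For the first sum, I would compute $\sum_{0\leq k\leq m}\frac{n}{b^{k+1}}=\frac{n}{b}\cdot\frac{1-b^{-(m+1)}}{1-b^{-1}}=\frac{1}{b-1}\left(n-\frac{n}{b^{m+1}}\right)$ and $\sum_{0\leq k\leq m}\frac{jb^k}{b^{k+1}}=(m+1)\frac{j}{b}$, giving a clean closed form. For the second sum, the key observation is that $\floor*{\frac{n+jb^k}{b^{k+1}}}=\floor*{\frac{n+jb^{(k+1)-1}}{b^{k+1}}}$, so reindexing with $\ell=k+1$ turns $\sum_{0\leq k\leq m}\floor*{\frac{n+jb^k}{b^{k+1}}}$ into $\sum_{1\leq \ell\leq m+1}\floor*{\frac{n+jb^{\ell-1}}{b^\ell}}$. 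Since $m=\floor*{\log_b n}$, all terms of the infinite sum $\sum_{\ell\geq 1}\floor*{\frac{n+jb^{\ell-1}}{b^\ell}}$ with $\ell>m+1$ vanish (because $n<b^{m+1}$ forces $\frac{n+jb^{\ell-1}}{b^\ell}<\frac{b^{m+1}+b^{\ell-1}}{b^\ell}\leq 1$ for $\ell\geq m+2$, as $j<b$), so this truncated sum equals the full sum evaluated in Theorem \ref{theorem1}, namely $\frac{n-s_b(n)}{b-1}+\lambda'_{b-j}$.

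Assembling the two pieces,
\begin{align*}
\sum\limits_{0\leq k\leq \log_bn}\fr*{\frac{n+jb^k}{b^{k+1}}}&=\frac{1}{b-1}\left(n-\frac{n}{b^{m+1}}\right)+(m+1)\frac{j}{b}-\left(\frac{n-s_b(n)}{b-1}+\lambda'_{b-j}\right)\\
&=\frac{1}{b-1}\left(s_b(n)-\frac{n}{b^{m+1}}\right)+(m+1)\frac{j}{b}-\lambda'_{b-j},
\end{align*}
which is the claimed identity. The main point requiring care is the truncation argument, i.e.\ verifying that the upper limit $m+1$ coming from $k\leq\log_b n$ exactly matches the effective range of nonzero terms in Theorem \ref{theorem1}'s sum; once that is pinned down, the rest is the routine geometric-series bookkeeping above. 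One should also note the trivial case $n=0$ separately (both sides are $0$), and observe that no analogue of the correction term $\left[c_{\nu_b(n)}\neq b-j\right]$ appears here because, unlike in the ceiling case, the floor is already insensitive to the lower-order digits.
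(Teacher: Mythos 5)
Your proof is correct, but it follows a genuinely different route from the paper. The paper proves Theorem \ref{theorem3} from scratch: it expands $n$ in base $b$, discards the integer part $\sum_{s\geq k+1}c_sb^{s-k-1}$, shows that the remaining quantity $\frac{c_k+j}{b}+\cdots+\frac{c_0}{b^{k+1}}$ lies in $[0,2)$, so the fractional part equals that quantity minus $\left[c_k\geq b-j\right]$, and then sums the resulting geometric series directly; in other words, it repeats the digit-by-digit analysis of Lemma \ref{lemma} in fractional-part form. You instead write $\fr*{y}=y-\floor*{y}$ termwise, evaluate $\sum_{0\leq k\leq m}\frac{n+jb^k}{b^{k+1}}$ by a geometric series, and recognize the floor sum, after the reindexing $\ell=k+1$, as the truncation of the infinite sum of Theorem \ref{theorem1}, which contributes $\frac{n-s_b(n)}{b-1}+\lambda'_{b-j}$. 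This makes Theorem \ref{theorem3} a quick corollary of Theorem \ref{theorem1} and avoids redoing the digit analysis, at the mild cost of having to justify the truncation; the paper's self-contained computation avoids that step but duplicates work already done. One small repair is needed in your truncation argument: the displayed bound $\frac{n+jb^{\ell-1}}{b^{\ell}}<\frac{b^{m+1}+b^{\ell-1}}{b^{\ell}}$ is not valid when $j\geq 2$, since $jb^{\ell-1}\not\leq b^{\ell-1}$; the clean estimate is $\frac{n+jb^{\ell-1}}{b^{\ell}}=\frac{n}{b^{\ell}}+\frac{j}{b}<\frac{1}{b}+\frac{b-1}{b}=1$ for $\ell\geq m+2$, using $n<b^{m+1}$ and $j\leq b-1$, which gives a vanishing floor exactly as you claim. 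With that inequality fixed, the argument is complete, and your closing remarks (the $n=0$ degeneracy and the absence of a correction term $\left[c_{\nu_b(n)}\neq b-j\right]$) are accurate.
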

\begin{proof}
Suppose $n$'s $b$-ary expansion is $$n=(c_m c_{m-1}\cdots c_1 c_0)_b,$$ that is $$n=c_mb^m+c_{m-1}b^{m-1}+\cdots+c_1b^1+c_0,$$ where each $c_s\in\left\lbrace 0,1,\dots ,b-1\right\rbrace$ and the leading digit $c_m$ is nonzero.

\begin{align*}
\sum\limits_{0\leq k\leq \log_bn}\fr*{\frac{n+jb^k}{b^{k+1}}}
&=\sum\limits_{0\leq k\leq \log_bn}\fr*{\sum\limits_{s\geq 0}\frac{c_sb^s}{b^{k+1}}+\frac{j}{b}}\\
&=\sum\limits_{0\leq k\leq \log_bn}\fr*{\sum\limits_{s\geq k+1}\frac{c_sb^s}{b^{k+1}}+\sum\limits_{0\leq s\leq k}\frac{c_sb^s}{b^{k+1}}+\frac{j}{b}}\\
&=\sum\limits_{0\leq k\leq \log_bn}\fr*{\sum\limits_{0\leq s\leq k}\frac{c_sb^s}{b^{k+1}}+\frac{j}{b}}.
\end{align*}

The inner sum is bounded by 
\begin{align*}
\frac{c_k+j}{b}+\frac{c_{k-1}}{b^2}+\cdots+\frac{c_0}{b^{k+1}}&\leq\frac{2b-2}{b}+\frac{b-1}{b^2}+\cdots+\frac{b-1}{b^{k+1}}\\
&=2-\frac{1}{b}-\frac{1}{b^{k+1}}\\
&<2.
\end{align*}

So, $\fr*{\sum\limits_{0\leq s\leq k}\frac{c_sb^s}{b^{k+1}}+\frac{j}{b}}$ is either $\sum\limits_{0\leq s\leq k}\frac{c_sb^s}{b^{k+1}}+\frac{j}{b}$ or $\sum\limits_{0\leq s\leq k}\frac{c_sb^s}{b^{k+1}}+\frac{j}{b}-1$.
For $c_k+j\geq b$ we have $\fr*{\sum\limits_{0\leq s\leq k}\frac{c_sb^s}{b^{k+1}}+\frac{j}{b}}\geq 1$ and $\fr*{\sum\limits_{0\leq s\leq k}\frac{c_sb^s}{b^{k+1}}+\frac{j}{b}}=\sum\limits_{0\leq s\leq k}\frac{c_sb^s}{b^{k+1}}+\frac{j}{b}-1$. 
For $c_k+j<b$ we have $c_k+j\leq b-1$ and
\begin{align*}
\fr*{\frac{c_k+j}{b}+\frac{c_{k-1}}{b^2}+\cdots+\frac{c_0}{b^{k+1}}}&\leq\fr*{\frac{b-1}{b}+\frac{b-1}{b^2}+\cdots+\frac{b-1}{b^{k+1}}}\\
&=1-\frac{1}{b^{k-1}}\\
&<1.
\end{align*}
We conclude that $\fr*{\sum\limits_{0\leq s\leq k}\frac{c_sb^s}{b^{k+1}}+\frac{j}{b}}=\sum\limits_{0\leq s\leq k}\frac{c_sb^s}{b^{k+1}}+\frac{j}{b}$.

\begin{align*}
\sum\limits_{0\leq k\leq \log_bn}\fr*{\frac{n+jb^k}{b^{k+1}}}&=\sum\limits_{0\leq k\leq \log_bn}\left(\sum\limits_{0\leq s\leq k}\frac{c_sb^s}{b^{k+1}}+\frac{j}{b}-\left[c_k\geq b-j\right]\right)\\
&=\sum\limits_{0\leq s\leq \log_bn}c_s\sum\limits_{s\leq k\leq \log_bn}b^{s-k-1}+(\floor*{\log_bn}+1)\frac{j}{b}-\lambda_{b-j}'\\
&=\sum\limits_{0\leq s\leq \log_bn}c_s\cdot\dfrac{b^{-1}-b^{s-\floor{\log_bn}-2}}{1-\frac{1}{b}}+(m+1)\frac{j}{b}-\lambda_{b-j}'\\
&=\sum\limits_{0\leq s\leq \log_bn}c_s\cdot\dfrac{1-b^{s-m-1}}{b-1}+(m+1)\frac{j}{b}-\lambda_{b-j}'\\
&=\dfrac{\sum\limits_{0\leq s\leq \log_bn}c_s}{b-1}-\dfrac{\sum\limits_{0\leq s\leq \log_bn}c_sb^{s}}{b^{m+1}(b-1)}+(m+1)\frac{j}{b}-\lambda_{b-j}'\\
&=\dfrac{1}{b-1}\left(s_b(n)-\dfrac{n}{b^{m+1}}\right)+(m+1)\frac{j}{b}-\lambda_{b-j}'.
\end{align*}
\end{proof}

Finally, by simple summation of the formula in Theorem \ref{theorem3} over $0<j<b$, the closed-form formula for the double-sum fractional analogue reads as follows:

\begin{cor}\label{fraccor} For every base $b\geq 2$ and for every nonnegative integer $n$ we have the following identity: $$\sum\limits_{0\leq k\leq \log_bn}\hspace{0.5mm}\sum\limits_{0<j<b}\fr*{\frac{n+jb^k}{b^{k+1}}}=(m+1)\frac{b-1}{2}-\dfrac{n}{b^{m+1}},$$ where $m=\floor{\log_bn}$ is the position of the leading digit in $n$'s $b$-ary expansion. \end{cor}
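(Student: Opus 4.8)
The plan is to derive Corollary \ref{fraccor} directly from Theorem \ref{theorem3} by summing the single-sum formula over all $j$ with $0<j<b$ and simplifying each resulting piece. Writing $F_j(n):=\sum_{0\leq k\leq \log_bn}\fr*{\frac{n+jb^k}{b^{k+1}}}$, Theorem \ref{theorem3} gives
$$F_j(n)=\frac{1}{b-1}\Bigl(s_b(n)-\frac{n}{b^{m+1}}\Bigr)+(m+1)\frac{j}{b}-\lambda'_{b-j}.$$
Summing over $j=1,\dots,b-1$ and interchanging the (finite) order of summation on the left, the double sum equals $\sum_{0<j<b}F_j(n)$, so I would treat the three terms on the right separately.

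First, the term $\frac{1}{b-1}\bigl(s_b(n)-\frac{n}{b^{m+1}}\bigr)$ does not depend on $j$, so summing it over the $b-1$ values of $j$ simply multiplies it by $b-1$, yielding $s_b(n)-\frac{n}{b^{m+1}}$. Second, $\sum_{0<j<b}(m+1)\frac{j}{b}=(m+1)\frac{1}{b}\sum_{j=1}^{b-1}j=(m+1)\frac{1}{b}\cdot\frac{(b-1)b}{2}=(m+1)\frac{b-1}{2}$, using the standard arithmetic-series formula. Third, $\sum_{0<j<b}\lambda'_{b-j}$ is just $\lambda'_{b-1}+\lambda'_{b-2}+\cdots+\lambda'_1=\sum_{0<i<b}\lambda'_i=|\lambda'|=s_b(n)$ by Remark \ref{remark1}. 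Combining the three contributions,
$$\sum_{0\leq k\leq \log_bn}\sum_{0<j<b}\fr*{\frac{n+jb^k}{b^{k+1}}}=\Bigl(s_b(n)-\frac{n}{b^{m+1}}\Bigr)+(m+1)\frac{b-1}{2}-s_b(n)=(m+1)\frac{b-1}{2}-\frac{n}{b^{m+1}},$$
which is exactly the claimed identity.

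There is essentially no obstacle here: every step is a routine consequence of Theorem \ref{theorem3} and Remark \ref{remark1}, with the only mild point being the cancellation of the two $s_b(n)$ terms and the reindexing $\sum_{0<j<b}\lambda'_{b-j}=\sum_{0<i<b}\lambda'_i$. I would also note in passing that the interchange of the summation order in the double sum is harmless because, for fixed $n$, only finitely many terms are involved (the outer sum runs over $0\leq k\leq m$ and the inner sum over the $b-1$ values of $j$), so no convergence issue arises. The proof is therefore a short computation, and I would present it essentially as the displayed chain of equalities above, citing Theorem \ref{theorem3} at the substitution step and Remark \ref{remark1} at the evaluation of $\sum_{0<j<b}\lambda'_{b-j}$.
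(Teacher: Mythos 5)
Your proposal is correct and follows essentially the same route as the paper: sum the formula of Theorem \ref{theorem3} over $0<j<b$, evaluate the three pieces (the $j$-independent term times $b-1$, the arithmetic series giving $(m+1)\frac{b-1}{2}$, and $\sum_{0<j<b}\lambda'_{b-j}=s_b(n)$ via Remark \ref{remark1}), and cancel the $s_b(n)$ terms. No gaps; this matches the paper's proof.
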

\begin{proof} By changing the order of summation the left-hand side is equal to
\begin{align*}
\sum\limits_{0<j<b}\sum\limits_{0\leq k\leq \log_bn}\fr*{\frac{n+jb^k}{b^{k+1}}}&=\sum\limits_{0<j<b}\left(\dfrac{1}{b-1}\left(s_b(n)-\dfrac{n}{b^{m+1}}\right)+(m+1)\frac{j}{b}-\lambda_{b-j}'\right)\\
&=s_b(n)-\dfrac{n}{b^{m+1}}+(m+1)\frac{b(b-1)}{2b}-s_b(n)\\
&=(m+1)\frac{b-1}{2}-\dfrac{n}{b^{m+1}}. \end{align*}
\end{proof}

Yet another among ‘‘400 best problems’’ \cite[Problem 4375, p.\ 50]{ODMP} concerns the summation of the following function: $$\cif{x}=\begin{cases}
x-\floor*{x}-\frac{1}{2}\left(=\fr*{x}-\frac{1}{2}\right), &\text{if $x$ is not an integer;}\\
0,&\text{if $x$ is an integer.}
\end{cases}$$  This function is basic for the definition of the well-known Dedekind sum \cite[Def.\ 5.1, p.\ 92]{AST}. Here we can easily obtain the closed-form analogues of single and double sums for this function.

\begin{remark}\label{remarkint} For $0\leq k\leq \log_bn$ and fixed $j$, $0< j<b$, only one among the fractions  $$\frac{n+jb^k}{b^{k+1}}$$ is an integer.\end{remark}
\begin{proof} If $\nu_b(n)=k$ then $c_k\neq 0$ and $n=\sum\limits_{s=k}^m c_s b^s,$ where $m=\floor*{\log_bn}$. Then  $$\frac{n+jb^k}{b^{k+1}}=\frac{c_k+j}{b}+c_{k+1}+c_{k+2}b+\cdots+c_mb^{m-k-1}.$$ Then $c_k=b-j$ gives the only integer value of the fraction above.\end{proof}

\begin{thm}\label{theorem4}
For every base $b\geq 2$, fixed $j$, $0<j<b$, and for every nonnegative integer $n$ we have the following identity:
$$\sum\limits_{0\leq k\leq \log_bn}\cif*{\frac{n+jb^k}{b^{k+1}}}=\dfrac{1}{b-1}\left(s_b(n)-\dfrac{n}{b^{m+1}}\right)+(m+1)\left(\frac{j}{b}-\frac{1}{2}\right)-\lambda_{b-j}'+\frac{1}{2}\left[c_{\nu_b(n)}=b-j\right],$$ where $m=\floor{\log_bn}$ is the position of the leading digit in $n$'s $b$-ary expansion. 
\end{thm}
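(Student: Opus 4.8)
The plan is to deduce Theorem~\ref{theorem4} directly from Theorem~\ref{theorem3} and Remark~\ref{remarkint}, by writing $\cif{\,\cdot\,}$ as a ``corrected'' fractional part. For every real $x$ one has the pointwise identity
\[
\cif*{x}=\fr*{x}-\frac12+\frac12\,[\,x\in\bZ\,],
\]
which is checked case-by-case from the definition of $\cif{\,\cdot\,}$: if $x\notin\bZ$ the Iverson bracket vanishes and the right-hand side equals $\fr*{x}-\tfrac12$; if $x\in\bZ$ then $\fr*{x}=0$ and the right-hand side equals $0-\tfrac12+\tfrac12=0$. Applying this with $x=\frac{n+jb^k}{b^{k+1}}$ and summing over $0\le k\le\log_b n$ splits the sum in question into three pieces:
\[
\sum_{0\le k\le\log_b n}\cif*{\frac{n+jb^k}{b^{k+1}}}
=\sum_{0\le k\le\log_b n}\fr*{\frac{n+jb^k}{b^{k+1}}}\;-\;\frac12\sum_{0\le k\le\log_b n}1\;+\;\frac12\sum_{0\le k\le\log_b n}\Bigl[\tfrac{n+jb^k}{b^{k+1}}\in\bZ\Bigr].
\]

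Next I would evaluate the three pieces in turn. The first is exactly the content of Theorem~\ref{theorem3}. For the second, the index $k$ runs over $0,1,\dots,\floor*{\log_b n}=m$, so there are $m+1$ terms and this piece equals $-\tfrac{m+1}{2}$. For the third I would appeal to Remark~\ref{remarkint}, or rather to the computation inside its proof: the fraction $\frac{n+jb^k}{b^{k+1}}$ can only be an integer when $k=\nu_b(n)$ (for $k<\nu_b(n)$ it is an integer plus $\tfrac{j}{b}$, and for $k>\nu_b(n)$ the $b$-adic valuation of the numerator is too small), and for $k=\nu_b(n)$ it is an integer precisely when $c_{\nu_b(n)}+j=b$, i.e. $c_{\nu_b(n)}=b-j$; hence
\[
\sum_{0\le k\le\log_b n}\Bigl[\tfrac{n+jb^k}{b^{k+1}}\in\bZ\Bigr]=\bigl[c_{\nu_b(n)}=b-j\bigr].
\]

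Finally I would assemble the three contributions. Substituting the formula of Theorem~\ref{theorem3} for the first piece together with the two values just found, the right-hand side becomes
\[
\frac{1}{b-1}\Bigl(s_b(n)-\frac{n}{b^{m+1}}\Bigr)+(m+1)\frac{j}{b}-\lambda'_{b-j}-\frac{m+1}{2}+\frac12\bigl[c_{\nu_b(n)}=b-j\bigr],
\]
and grouping the two terms proportional to $m+1$ into $(m+1)\bigl(\tfrac{j}{b}-\tfrac12\bigr)$ yields exactly the claimed identity. I do not expect a genuine obstacle here: everything reduces to Theorem~\ref{theorem3} plus elementary bookkeeping, and the only point requiring care is the third piece, where one must use Remark~\ref{remarkint} in the sharp form ``the fraction is an integer iff $k=\nu_b(n)$ and $c_{\nu_b(n)}=b-j$'' rather than merely ``at most one fraction is an integer''. (One could afterwards obtain the matching double-sum identity by summing over $0<j<b$, exactly as in Corollary~\ref{fraccor}, using in addition that $\sum_{0<j<b}[\,c_{\nu_b(n)}=b-j\,]=1$ since $c_{\nu_b(n)}\in\{1,\dots,b-1\}$.)
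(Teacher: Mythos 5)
Your proposal is correct and follows essentially the same route as the paper: decompose $\cif*{x}=\fr*{x}-\tfrac12+\tfrac12[x\in\bZ]$, apply Theorem~\ref{theorem3} to the fractional-part sum, count the $m+1$ terms, and use Remark~\ref{remarkint} to reduce the integrality bracket to $\bigl[c_{\nu_b(n)}=b-j\bigr]$. Your explicit insistence on the sharp form of Remark~\ref{remarkint} (integer iff $k=\nu_b(n)$ and $c_{\nu_b(n)}=b-j$) is in fact a slightly more careful reading than the remark's literal statement, and matches how the paper actually uses it.
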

\begin{proof}
Suppose $n$'s $b$-ary expansion $n=c_mb^m+c_{m-1}b^{m-1}+\cdots+ c_1b^1+c_0$ with $c_m$ is nonzero.
\begin{align*}
&\sum\limits_{0\leq k\leq \log_bn}\cif*{\frac{n+jb^k}{b^{k+1}}}\\
=&\sum\limits_{0\leq k\leq \log_bn}\fr*{\frac{n+jb^k}{b^{k+1}}}-\sum\limits_{0\leq k\leq \log_bn}\frac{1}{2}+\frac{1}{2}\left[c_{\nu_b(n)}=b-j\right]& \left(\textup{by Remark \ref{remarkint}}\right)\\
=&\dfrac{1}{b-1}\left(s_b(n)-\dfrac{n}{b^{m+1}}\right)+(m+1)\frac{j}{b}-\lambda_{b-j}'-\dfrac{m+1}{2}+\frac{1}{2}\left[c_{\nu_b(n)}=b-j\right]\\
=&\dfrac{1}{b-1}\left(s_b(n)-\dfrac{n}{b^{m+1}}\right)+(m+1)\left(\frac{j}{b}-\frac{1}{2}\right)-\lambda_{b-j}'+\frac{1}{2}\left[c_{\nu_b(n)}=b-j\right].
\end{align*}
\end{proof}

\begin{cor} For every base $b\geq 2$ and for every nonnegative integer $n$ we have the following identity: $$\sum\limits_{0\leq k\leq \log_bn}\hspace{0.5mm}\sum\limits_{0<j<b}\cif*{\frac{n+jb^k}{b^{k+1}}}=\dfrac{1}{2}-\dfrac{n}{b^{m+1}},$$ where $m=\floor{\log_bn}$ is the position of the leading digit in $n$'s $b$-ary expansion. \end{cor}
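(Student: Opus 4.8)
The plan is to derive this double-sum identity by summing the single-sum formula of Theorem \ref{theorem4} over $j=1,2,\dots,b-1$ and interchanging the order of summation, in exactly the spirit of Corollary \ref{fraccor} and the corollary following Theorem \ref{theorem2}. After the interchange the left-hand side becomes $\sum_{0<j<b}\sum_{0\leq k\leq \log_b n}\cif*{\frac{n+jb^k}{b^{k+1}}}$, so everything reduces to adding up, over the $b-1$ admissible values of $j$, the four summands appearing on the right-hand side of Theorem \ref{theorem4}.

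First I would dispose of the term $\frac{1}{b-1}\bigl(s_b(n)-\frac{n}{b^{m+1}}\bigr)$, which is independent of $j$: summed over $0<j<b$ it contributes $s_b(n)-\frac{n}{b^{m+1}}$. Next, for the term $(m+1)\bigl(\frac{j}{b}-\frac{1}{2}\bigr)$ I would use $\sum_{j=1}^{b-1}j=\frac{b(b-1)}{2}$ to obtain $(m+1)\bigl(\frac{b-1}{2}-\frac{b-1}{2}\bigr)=0$; this is the one spot where a small cancellation must be noticed, and it is the same one already used in Corollary \ref{fraccor}. For the term $-\lambda_{b-j}'$, Remark \ref{remark1} gives $\sum_{0<j<b}\lambda_{b-j}'=\sum_{0<j<b}\lambda_j'=s_b(n)$, so its contribution is $-s_b(n)$, which cancels the $s_b(n)$ produced by the first term. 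Finally, for the Iverson term $\frac{1}{2}\left[c_{\nu_b(n)}=b-j\right]$ I would argue exactly as in the proof of the corollary after Theorem \ref{theorem2}: there is a unique $k$ with $\nu_b(n)=k$, the digit $c_{\nu_b(n)}$ is a fixed element of $\{1,\dots,b-1\}$ (it is nonzero by definition of $\nu_b$), and as $j$ runs over $\{1,\dots,b-1\}$ the quantity $b-j$ equals it exactly once; hence $\sum_{0<j<b}\left[c_{\nu_b(n)}=b-j\right]=1$ and this term contributes $\frac{1}{2}$.

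Collecting the four contributions gives $\bigl(s_b(n)-\frac{n}{b^{m+1}}\bigr)+0-s_b(n)+\frac{1}{2}=\frac{1}{2}-\frac{n}{b^{m+1}}$, which is the asserted identity. There is no genuine obstacle here: the argument is a mechanical consequence of Theorem \ref{theorem4}, Remark \ref{remark1}, and the uniqueness of $\nu_b(n)$, all of which are already available; the only care needed is the bookkeeping that makes the two $s_b(n)$ terms cancel and the linear-in-$j$ term vanish.
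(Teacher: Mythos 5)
Your proposal is correct and follows essentially the same route as the paper: sum the formula of Theorem \ref{theorem4} over $0<j<b$, note the $(m+1)$-term vanishes, cancel the two $s_b(n)$ contributions via Remark \ref{remark1}, and observe that the Iverson term contributes $\frac{1}{2}$ exactly once (the paper invokes Remark \ref{remarkint} for this last point, which is the same uniqueness fact you argue directly).
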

\begin{proof} Note that, by Remark \ref{remarkint}, the sum $\sum\limits_{0<j<b}\frac{1}{2}\left[c_{\nu_b(n)}=b-j\right]$ is equal to $\frac{1}{2}$.
\begin{align*}
&\sum\limits_{0\leq k\leq \log_bn}\sum\limits_{0<j<b}\cif*{\frac{n+jb^k}{b^{k+1}}}\\
=&\sum\limits_{0<j<b}\left(\dfrac{1}{b-1}\left(s_b(n)-\dfrac{n}{b^{m+1}}\right)+(m+1)\left(\frac{j}{b}-\frac{1}{2}\right)-\lambda_{b-j}'+\frac{1}{2}\left[c_{\nu_b(n)}=b-j\right]\right)\\
=&s_b(n)-\dfrac{n}{b^{m+1}}+(m+1)\left(\frac{b(b-1)}{2b}-\frac{b-1}{2}\right)-s_b(n)+\sum\limits_{0<j<b}\frac{1}{2}\left[c_{\nu_b(n)}=b-j\right]\\
=&\frac{1}{2}-\dfrac{n}{b^{m+1}}. 
\end{align*}
\end{proof}

We hope that our summation techniques may be applied further in number theory, approximation theory or elsewhere.

\end{document}